\documentclass[preprint,12pt]{elsarticle}

\usepackage{amsmath,amssymb,amsthm}
\usepackage{mathtools}
\usepackage[hidelinks]{hyperref}

\journal{Discrete Applied Mathematics}

\newtheorem{theorem}{Theorem}[section]
\newtheorem{lemma}[theorem]{Lemma}

\newtheorem{claim}{Claim}[section]

\begin{document}

\begin{frontmatter}

\title{Every \(2\)-Subdivision of a Cubic Graph Is Antimagic}

\author[ndhu]{Fei-Huang Chang\corref{cor1}\fnref{fund1}}
\ead{cfh@gms.ndhu.edu.tw}

\author[ntu]{Teng-Da Chang}
\ead{s10001218@gmail.com}

\author[tku]{Zhishi Pan\fnref{fund2}}
\ead{zhishi@mail.tku.edu.tw}

\cortext[cor1]{Corresponding author.}

\address[ndhu]{Department of Applied Mathematics,
National Dong Hwa University, Hualien 974301, Taiwan}

\address[ntu]{Department of Mathematics,
National Taiwan University, Taipei 10617, Taiwan}

\address[tku]{Department of Mathematics,
Tamkang University, New Taipei City 251301, Taiwan}

\fntext[fund1]{Fei-Huang Chang was supported by the National Science and
Technology Council of Taiwan under Grant NSTC 114-2115-M-259-005.}

\fntext[fund2]{Zhishi Pan was supported by the National Science and
Technology Council of Taiwan under Grant NSTC 114-2115-M-032-004.}

\begin{abstract}
Let \(G\) be a finite simple cubic graph, not necessarily connected, and
let \(S_2(G)\) be obtained by subdividing every edge of \(G\) twice.
Li(2025) developed general constructions for antimagic labelings of repeated
subdivisions, but the cubic case \(G(3)=S_2(G)\) is not covered by those
methods. We prove that \(S_2(G)\) is strongly antimagic. Our first proof
constructs an edge labeling of \(G\) in which every vertex sum is
sufficiently large and occurs at most twice, and then uses an orientation
after subdivision to separate the remaining equal sums. A second, direct
construction uses the same path decomposition to make the internal
contribution at each original vertex constant, while a unique endpoint
contribution distinguishes the resulting sums. The direct construction
further shows that \(S_2(G)\) is strongly antimagic whenever every vertex
of \(G\) has odd degree at least three.
\end{abstract}

\begin{keyword}
antimagic labeling \sep strongly antimagic labeling \sep cubic graph
\sep odd-degree graph \sep \(2\)-subdivision \sep path decomposition
\end{keyword}

\end{frontmatter}

\section{Introduction}
\label{sec:introduction}

All graphs considered in this paper are finite and simple. For a graph
\(G\) and a vertex \(v\in V(G)\), we write \(d_G(v)\) for the degree of
\(v\) in \(G\). Unless explicitly stated otherwise, the graphs are not
required to be connected. Let \(G\) be a graph with \(m\) edges. An
\emph{antimagic labeling} of \(G\) is a bijection
\[
f:E(G)\longrightarrow [m]
\]
such that the induced vertex sums
\[
\sigma_f(v)=\sum_{e\ni v}f(e),
\qquad v\in V(G),
\]
are pairwise distinct. Hartsfield and Ringel introduced this notion and
conjectured that every connected graph other than \(K_2\) is antimagic
\cite{hartsfield1989supermagic}. Although the conjecture remains open in
full generality, it has been verified for many important graph classes.

An antimagic labeling \(f\) is called \emph{strongly antimagic} if
\[
d_G(u)<d_G(v)
\quad\Longrightarrow\quad
\sigma_f(u)<\sigma_f(v)
\]
for every pair of vertices \(u,v\in V(G)\). Thus, a strongly antimagic
labeling not only distinguishes all vertex sums, but also orders them
according to vertex degree. This stronger property has been studied for
several families of nonregular graphs. In particular, double spiders were
shown to be strongly antimagic by Chang, Chin, Li, and Pan
\cite{chang2020double}; a later inductive treatment provided a shorter
proof and corrected a subtle point in the original argument
\cite{liu2022inductive}.

For a positive integer \(\ell\), the \(\ell\)-subdivision \(S_\ell(G)\)
is obtained by subdividing every edge of \(G\) exactly \(\ell\) times.
Thus, every original edge is replaced by a path of length \(\ell+1\). In
particular, \(S_2(G)\) is obtained by replacing every edge of \(G\) with
a path of length three. This convention differs from that of Li
\cite{li2025subdivisions}, who writes \(G(s)\) for the graph obtained by
replacing every edge with a path of length \(s\). Consequently,
\[
S_\ell(G)=G(\ell+1),
\]
and the graph considered here is \(S_2(G)=G(3)\).

Regular graphs form one of the best understood families in antimagic
labeling theory. Cranston proved that regular bipartite graphs of degree
at least two are antimagic \cite{cranston2009regular}. Liang and Zhu
established the cubic case \cite{liang2014cubic}; Cranston, Liang, and Zhu
treated regular graphs of odd degree \cite{cranston2015odd}; and Chang,
Liang, Pan, and Zhu completed the even-degree case
\cite{chang2016regular}. Hence every regular graph of degree at least two
is known to be antimagic. These results concern the original regular
graph, however, and do not directly determine whether its subdivisions
are antimagic. Subdivision introduces many vertices of degree two, whose
sums must be separated from one another and from the sums at the original
vertices.

The difficulty caused by vertices of degree two is already visible in the
study of trees. Liang, Wong, and Zhu obtained several sufficient
conditions for trees containing degree-two vertices to be antimagic
\cite{liang2014trees}. Their work illustrates that subdivision is not
merely a formal graph operation in the antimagic setting: the labels along
the replacing paths must be coordinated carefully in order to control the
sums at the newly inserted vertices.

A closely related result was obtained by Deng and Li
\cite{deng2022biregular}. They proved that every connected
\((2,k)\)-biregular bipartite graph with \(k\ge3\) is antimagic. If \(G\)
is a connected \(k\)-regular graph and every edge of \(G\) is subdivided
exactly once, then the resulting graph is \((k,2)\)-biregular. Their
theorem therefore implies that \(S_1(G)\) is antimagic for every connected
\(k\)-regular graph with \(k\ge3\). This settles the case in which every
original edge is replaced by a path of length two, but it does not address
repeated subdivisions, where consecutive degree-two vertices occur on
each replacing path.

Li developed a systematic framework for antimagic labelings of repeated
subdivisions \cite{li2025subdivisions}. For a graph \(G\) and an integer
\(s\ge2\), the paper studies the graph \(G(s)\) obtained by replacing
every edge of \(G\) with a path of length \(s\), and gives constructive
sufficient conditions under which \(G(s)\) is antimagic. The method for
regular graphs does not apply to degrees three and five. In particular,
it does not cover the cubic graph \(G(3)\), which is precisely \(S_2(G)\).
Thus, the present problem is a natural low-degree and short-subdivision
case outside the previously available general constructions.

The main result of this paper is the following.

\medskip
\noindent\textbf{Main result.}
For every finite simple cubic graph \(G\), not necessarily connected, the
graph \(S_2(G)\) admits a strongly antimagic labeling.
\medskip

As a further consequence of the direct construction, we extend the result
to every finite simple graph whose vertices all have odd degree at least
three.

When \(G\) is cubic, \(S_2(G)\) has only vertices of degrees two and
three. Thus, in the cubic case, the additional strongly antimagic
requirement means that every original degree-three vertex must receive a
larger sum than every newly inserted degree-two vertex. Both constructions
in this paper satisfy this condition: all subdivision-vertex sums are at
most \(4m\), whereas all original-vertex sums are greater than \(4m\).

We give two proofs. In the first, we decompose the edges of the cubic
graph into paths and label the original edges so that all vertex sums are
sufficiently large and no sum occurs more than twice. Vertices receiving
the same sum are paired, and an orientation obtained from an Euler tour of
an auxiliary \(4\)-regular multigraph assigns different outdegrees to the
two vertices in each pair. The labeling is then transferred to \(S_2(G)\),
where the outdegree correction separates the remaining equal sums at the
original vertices.

The proof of the first construction suggests a more direct approach. We
again orient the paths in the decomposition and assign alternating large
and small indices to their edges. This time, however, the three labels on
each subdivided edge are ordered according to whether its index comes from
the large or small part of the alternating sequence. As a result, the two
subdivided edges corresponding to the path on which an original vertex is
internal contribute the same constant at every original vertex. The label
arising from the unique path ending at that vertex then distinguishes its
total sum from all others. This gives a direct strongly antimagic labeling
of \(S_2(G)\).

The path decomposition used in the two constructions is valid more
generally for graphs in which every vertex has odd degree. By using the
full strength of Claim~\ref{clm:path-decomposition} together with the
direct labeling developed in Section~\ref{sec:direct-labeling}, we further
prove that \(S_2(G)\) is strongly antimagic whenever every vertex of \(G\)
has odd degree at least three. This extension is presented in
Section~\ref{sec:odd-degree-extension}.

The first proof is retained because it separates the argument into steps
that may be useful in other subdivision problems: one first allows a
controlled amount of repetition among the sums on the original graph and
then removes those repetitions after subdivision. The second proof shows
how the particular structure revealed by that argument can be
incorporated directly into the labeling of \(S_2(G)\).

The remainder of the paper is organized as follows. In
Section~\ref{sec:main-results}, we give the first proof for cubic graphs by
combining an auxiliary edge labeling with a suitable orientation. In
Section~\ref{sec:direct-labeling}, we present a direct strongly antimagic
labeling of \(S_2(G)\). In Section~\ref{sec:odd-degree-extension}, we
extend the direct construction to graphs in which every vertex has odd
degree at least three. Section~\ref{sec:conclusion} concludes the paper
and discusses several related open problems.

\section{Antimagic Labelings of \(2\)-Subdivisions of Cubic Graphs}
\label{sec:main-results}

Throughout this section, let \(G\) be a finite simple cubic graph, not
necessarily connected, of order \(n\) and size \(m=3n/2\). For a positive
integer \(r\), write
\[
[r]=\{1,2,\ldots,r\}.
\]
If \(g:E(G)\to[m]\) is an edge labeling, let
\[
\sigma_g(v)=\sum_{e\ni v}g(e)
\]
denote the sum induced at \(v\).

\subsection{An auxiliary labeling of the cubic graph}
\label{subsec:auxiliary-labeling}

\begin{lemma}
\label{lem:auxiliary-labeling}
There exists a bijection \(g:E(G)\to[m]\) such that
\[
\sigma_g(v)\ge m+1
\qquad\text{for every }v\in V(G),
\]
and every induced vertex sum occurs at most twice.
\end{lemma}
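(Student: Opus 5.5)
The plan is to build \(g\) from a path decomposition of \(G\) and a zigzag labeling along the concatenated paths. \textbf{Step 1.} Take the path decomposition of Claim~\ref{clm:path-decomposition}, which I will use in the following form: \(E(G)\) splits into edge-disjoint paths \(P_1,\dots,P_t\) such that every vertex of \(G\) is an endpoint of exactly one path. This form is what a graph with all degrees odd admits, e.g.\ by pairing the edges at each vertex except one and following the pairings into trails, then splitting any closed or self-intersecting trails at the unpaired edges. Since \(d_G(v)=3\), every \(v\) is then an interior vertex of exactly one path, through two consecutive edges of that path. So
\[
\sigma_g(v)=c(v)+\ell(v),
\]
where \(c(v)\) is the sum of the two consecutive labels at the interior occurrence of \(v\), and \(\ell(v)\) is the label of the terminal edge of the path ending at \(v\).

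\textbf{Step 2.} Orient each path and concatenate them into one sequence \(e_1,\dots,e_m\), with edges of each path consecutive and in path order. Assign the labels in zigzag fashion:
\[
g(e_{2i-1})=m+1-i,\qquad g(e_{2i})=i .
\]
Any two consecutive positions then sum to \(m+1\) (boundary after an odd position) or \(m\) (after an even position). Hence \(c(v)\in\{m,m+1\}\) and \(\sigma_g(v)\ge m+1\), which is the first assertion.

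\textbf{Step 3.} For multiplicity, note that each edge is the terminal edge for at most two vertices, namely the two endpoints of a path of length one, and otherwise for at most one. If all \(c(v)\) were equal, the sum \(\sigma_g(v)\) would determine \(\ell(v)\) and hence the edge, giving multiplicity at most two immediately. The danger is a vertex with \(c=m\) and terminal label \(a\) colliding with a vertex with \(c=m+1\) and terminal label \(a-1\).

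\textbf{Main obstacle.} Controlling these mixed-parity collisions is the step I expect to be hardest, because \(c\) cannot be made constant in \(G\): consecutive edges \(x_1x_2x_3\) with \(x_1+x_2=x_2+x_3\) would force \(x_1=x_3\). I would first try to exploit the freedom in the orientation of each path and in the order of concatenation, choosing the parity of each path's starting position. The aim is to correlate \(c(v)\) with the parity or size of \(\ell(v)\): terminal edges at odd positions receive large labels and those at even positions small ones. One checks that the labels \(a\) and \(a-1\) then sit on edges of opposite "large/small" type. After this choice, a value \(s\) should be attainable only from a single terminal edge or from a controlled pair.

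If bookkeeping of this kind cannot be made to work, the fallback is a greedy repair. Swap labels of two terminal edges of the same type to break any triple coincidence, and argue by a counting or extremal choice (minimize the number of triples) that a swap always reduces the count while keeping all sums at least \(m+1\).
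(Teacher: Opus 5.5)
Your Step~2 is correct: the zigzag is a bijection onto \([m]\), gives \(c(v)\in\{m,m+1\}\), and hence \(\sigma_g(v)\ge m+1\). The second assertion of the lemma, however, is never proved. The ``Main obstacle'' paragraph is a plan, not an argument.

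\begin{itemize}
\item You do not show that reorienting or reordering the paths can correlate \(c(v)\) with the type of \(\ell(v)\). The first is governed by where \(v\) sits as an interior vertex of one path; the second by the position of an end edge of a different path. Reordering only changes which edge occupies which position.
\item The swap fallback fails as stated. The end edge of a path of length at least two is also one of the two consecutive edges at the neighbouring interior vertex, so swapping its label destroys \(c\in\{m,m+1\}\) there. You also exhibit no quantity that a swap decreases.
\item The parenthetical derivation of the decomposition in Step~1 is unsound: closed trails contain no unpaired edges, and cutting a trail at a repeated vertex creates extra ends. This does no harm, since you can simply cite Claim~\ref{clm:path-decomposition}.
\end{itemize}

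You have also misplaced the danger. A mixed collision \((c,\ell)=(m,a)\) versus \((m+1,a-1)\) involves only two vertices, which is allowed. Whenever \(v\mapsto\ell(v)\) is injective, a sum \(S\) is attained by at most one vertex with \(\ell=S-m\) and at most one with \(\ell=S-m-1\).

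Injectivity fails exactly at paths of length one, whose single edge is terminal for both ends. If \(uv\) is such a path with label \(a\) and \(c(u)=c(v)=m\), then any vertex \(w\) with \(\ell(w)=a-1\) and \(c(w)=m+1\) gives a triple. Your construction does nothing to exclude this.

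The missing idea is to pull these \(t\) single-edge paths out of the zigzag:
\begin{itemize}
\item Give them the labels \(1,\dots,t\), and run the alternation \(m,t+1,m-1,t+2,\dots\) on the remaining edges only. The internal pair sums then lie in \(\{m+t,m+t+1\}\).
\item Put a path of length at least three first, so that the label \(t+1\) is not on an end edge. This forces every other vertex to have sum at least \(m+2t+2\), while endpoints of single-edge paths have sums at most \(m+2t+1\).
\item Label the single edges \(1,\dots,t\) in nonincreasing order of \(a(e)\), the number of endpoints with pair sum \(m+t\). Then for \(2\le k\le t\) the sum \(m+t+k\) occurs \(a_k+(2-a_{k-1})\le 2\) times. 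The sums \(m+t+1\) and \(m+2t+1\) occur \(a_1\) and \(2-a_t\) times respectively.
\end{itemize}
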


We first record a path decomposition that applies more generally to graphs
in which every vertex has odd degree. Its specialization to cubic graphs
will be used throughout this section, while its full strength will be used
in Section~\ref{sec:odd-degree-extension}.

\begin{claim}
\label{clm:path-decomposition}
Let \(G\) be a finite simple graph, not necessarily connected, of order
\(n\), and suppose that every vertex of \(G\) has odd degree. Then \(E(G)\)
can be decomposed into \(n/2\) paths such that every vertex \(v\in V(G)\)
is an end vertex of exactly one path and an internal vertex of exactly
\[
\frac{d_G(v)-1}{2}
\]
paths.
\end{claim}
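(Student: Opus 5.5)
The plan is to reduce the claim to a counting statement: it is enough to decompose \(E(G)\) into at most \(n/2\) paths. The rest then follows from parity. First, \(n\) is even, because the number of odd-degree vertices of any graph is even. Now suppose \(E(G)=P_1\cup\cdots\cup P_k\) is a decomposition into paths with \(k\le n/2\). For a vertex \(v\), let \(a_v\) be the number of paths having \(v\) as an end vertex and \(b_v\) the number having \(v\) as an internal vertex. Each end occurrence uses one edge at \(v\) and each internal occurrence uses two, so
\[
d_G(v)=a_v+2b_v .
\]
Since \(d_G(v)\) is odd, \(a_v\) is odd, and in particular \(a_v\ge 1\). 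Counting end vertices gives
\[
\sum_v a_v=2k\le n,
\]
and since every \(a_v\ge 1\), this forces \(a_v=1\) for all \(v\) and \(k=n/2\). The formula \(b_v=(d_G(v)-1)/2\) then follows at once. Components need no separate treatment: each component also has all degrees odd, so we may decompose componentwise and add the counts.

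It therefore remains to produce a decomposition of \(E(G)\) into at most \(n/2\) paths. The natural first attempt is an Euler tour. Add a new vertex \(x\) adjacent to every vertex of \(G\). The resulting graph has all degrees even (\(x\) has even degree \(n\)) and is connected. An Euler tour passes through \(x\) exactly \(n/2\) times, and deleting \(x\) cuts the tour into \(n/2\) trails covering \(E(G)\), each vertex being an end of exactly one of them.

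The main obstacle is that these pieces are trails, not paths: a trail may revisit a vertex. My first attempt is a minimal-counterexample or rerouting argument. Among all decompositions of \(E(G)\) into \(n/2\) trails in which each vertex is an end exactly once, choose one minimizing the total number of repeated vertex visits. If some trail \(T=A\,v\,B\,v\,C\) has a repeated vertex \(v\), the closed subtrail \(B\) should be re-spliced. One option is to detach it and insert it into the unique trail ending at a vertex of \(B\) at which no repetition is created. Another is to exchange segments with the trail that ends at \(v\).

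If this local surgery becomes too delicate, I will instead invoke Lovász's path-decomposition theorem. It states that a graph on \(n\) vertices in which every vertex has odd degree decomposes into exactly \(n/2\) paths; this is the key case in Lovász's proof that every graph decomposes into at most \(\lfloor n/2\rfloor\) paths and cycles. That theorem supplies precisely the decomposition needed, and the parity count in the first paragraph then completes the claim.
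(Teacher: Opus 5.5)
Your proposal is correct and, once you fall back on Lov\'asz's theorem, it is the paper's proof: apply Lov\'asz componentwise to obtain at most \(n/2\) paths, then use \(d_G(v)=a_v+2b_v\) with \(a_v\) odd and \(\sum_v a_v=2k\le n\) to force \(a_v=1\) for every \(v\) and \(k=n/2\). The Euler-tour construction only yields trails, and your rerouting sketch for turning them into paths is unfinished, so drop it and rely on the Lov\'asz step.
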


\begin{proof}
Apply the path-decomposition theorem of Lov\'asz
\cite{lovasz1968covering} to each connected component of \(G\). Since
every vertex has odd degree, the resulting decompositions combine to give
a path decomposition \(\mathcal P\) of \(G\) satisfying
\[
|\mathcal P|\le \frac{n}{2}.
\]

For each \(v\in V(G)\), let \(r(v)\) denote the number of paths in
\(\mathcal P\) having \(v\) as an end vertex, and let \(q(v)\) denote the
number of paths in which \(v\) is an internal vertex. Since the paths in
\(\mathcal P\) are edge-disjoint and cover \(E(G)\),
\[
d_G(v)=2q(v)+r(v).
\]
Because \(d_G(v)\) is odd, \(r(v)\) is odd. In particular, \(r(v)\ge1\)
for every \(v\in V(G)\). Counting all path ends gives
\[
2|\mathcal P|
=
\sum_{v\in V(G)}r(v)
\ge n.
\]
Together with \(|\mathcal P|\le n/2\), this yields
\[
|\mathcal P|=\frac{n}{2}
\qquad\text{and}\qquad
\sum_{v\in V(G)}r(v)=n.
\]
Since every \(r(v)\) is a positive integer, equality implies
\[
r(v)=1
\qquad\text{for every }v\in V(G).
\]
Substituting this into \(d_G(v)=2q(v)+r(v)\), we obtain
\[
q(v)=\frac{d_G(v)-1}{2}.
\]
Thus every vertex is an end vertex of exactly one path and an internal
vertex of exactly \((d_G(v)-1)/2\) paths.
\end{proof}

Since \(G\) is cubic throughout the remainder of this section,
Claim~\ref{clm:path-decomposition} implies that every vertex of \(G\) is
an end vertex of exactly one path and an internal vertex of exactly one
path.

\begin{proof}[Proof of Lemma~\ref{lem:auxiliary-labeling}]
Fix a path decomposition \(\mathcal P\) as in
Claim~\ref{clm:path-decomposition}. Its \(n/2\) paths have total length
\(m=3n/2\), so at least one of them has length at least three. Let \(t\)
be the number of paths of length one, and denote the remaining paths by
\[
Q_1,Q_2,\ldots,Q_s,
\]
where \(Q_1\) has length at least three. Give every \(Q_i\) an arbitrary
direction, list its edges in that direction, and concatenate the lists to
obtain
\[
e_1,e_2,\ldots,e_{m-t}.
\]

Reserve the labels in \([t]\) for the paths of length one. On the
remaining edges, define
\[
g(e_{2j-1})=m-j+1
\qquad\text{and}\qquad
g(e_{2j})=t+j
\]
whenever the indicated edge exists. Thus the labels are assigned in the
global order
\[
m,\ t+1,\ m-1,\ t+2,\ m-2,\ t+3,\ldots,
\]
without restarting the alternation when one path ends and the next
begins.

For \(v\in V(G)\), let \(\rho(v)\) be the sum of the labels on the two
edges of the path on which \(v\) is internal.

\begin{claim}
\label{clm:internal-pair-sums}
For every \(v\in V(G)\),
\[
\rho(v)\in\{m+t,m+t+1\}.
\]
Moreover, the label \(t+1\) is not assigned to an end edge of any
\(Q_i\).
\end{claim}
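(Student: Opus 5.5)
Since \(v\) is internal to exactly one path of \(\mathcal P\), and that path has length at least two, it is one of the \(Q_i\). The two edges of \(Q_i\) at \(v\) are therefore consecutive in the list \(e_1,\ldots,e_{m-t}\): they are \(e_k,e_{k+1}\) for some \(k\) with \(1\le k<m-t\). The plan is to compute \(g(e_k)+g(e_{k+1})\) directly from the two formulas, splitting according to the parity of \(k\).

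If \(k=2j-1\) is odd, then
\[
g(e_k)+g(e_{k+1})=(m-j+1)+(t+j)=m+t+1.
\]
If \(k=2j\) is even, then
\[
g(e_k)+g(e_{k+1})=(t+j)+(m-(j+1)+1)=m+t.
\]
The concatenation introduces no boundary problems, because both edges lie inside a single \(Q_i\) and the indices are consecutive. The alternation formula holds for all indices up to \(m-t\) regardless of where paths begin or end, since it was deliberately not restarted. This gives \(\rho(v)\in\{m+t,m+t+1\}\).

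For the second assertion, note that \(g(e_{2j})=t+1\) forces \(j=1\), so the label \(t+1\) sits on \(e_2\). Since \(Q_1\) is listed first, its edges are \(e_1,\ldots,e_{\ell}\) with \(\ell\ge3\). Hence \(e_2\) is an interior edge of \(Q_1\): it is neither the first edge \(e_1\) nor the last edge \(e_\ell\). Because each edge belongs to exactly one path, \(e_2\) is not an end edge of any other \(Q_i\) either. This is precisely why \(Q_1\) was chosen to have length at least three.

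The only points needing care, and the main (mild) obstacle, are two checks. First, the labels used are genuinely distinct and lie in \([m]\setminus[t]\): the odd-indexed labels decrease from \(m\) and the even-indexed labels increase from \(t+1\), and together they account for exactly \(m-t\) labels. Second, there are enough edges: \(e_{k+1}\) exists whenever \(v\) is internal, since \(k+1\le m-t\). Neither check affects the sum computation itself.
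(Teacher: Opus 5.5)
Your proposal is correct and follows the paper's proof: both compute the sum of two consecutive labels in the unrestarted alternating sequence (\(m+t+1\) or \(m+t\)), use the fact that the two edges at \(v\) on its unique internal path are consecutive in the list, and observe that \(t+1\) lies on \(e_2\), which is interior to \(Q_1\) because \(Q_1\) has length at least three. Your parity split and the remark that \(t+1\) occurs only once (so it cannot also sit on an odd-indexed end edge) simply make explicit what the paper leaves implicit.
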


\begin{proof}
Every two consecutive labels in the global sequence have sum
\[
(m-j+1)+(t+j)=m+t+1
\]
or
\[
(t+j)+(m-j)=m+t.
\]
Since \(G\) is cubic, \(v\) is internal to a unique path of
\(\mathcal P\). The two edges of this path incident with \(v\) are
consecutive in the global sequence. The label \(t+1\) is assigned to
\(e_2\); since \(Q_1\) has length at least three, \(e_2\) is not an end
edge.
\end{proof}

Let \(V_0\) be the set of vertices whose unique path in \(\mathcal P\)
ending at that vertex has length one, and put
\[
V_1=V(G)\setminus V_0.
\]
For \(v\in V_1\), let \(c(v)\) be the label on the end edge of the
nontrivial path ending at \(v\).

\begin{claim}
\label{clm:V1-sums}
Every vertex sum induced on \(V_1\) occurs at most twice, and
\[
\sigma_g(v)\ge m+2t+2
\qquad\text{for every }v\in V_1.
\]
\end{claim}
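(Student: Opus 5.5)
For \(v\in V_1\) we write
\[
\sigma_g(v)=\rho(v)+c(v).
\]
This holds because the three edges at \(v\) are the two edges of the path on which \(v\) is internal, together with the end edge of the unique path ending at \(v\). That path is nontrivial since \(v\notin V_0\), so its end edge carries label \(c(v)\), which lies in \(\{t+1,\dots,m\}\).

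For the lower bound, Claim~\ref{clm:internal-pair-sums} gives \(\rho(v)\ge m+t\). The same claim says that the label \(t+1\) is never on an end edge of any \(Q_i\), so \(c(v)\ge t+2\). Hence \(\sigma_g(v)\ge m+2t+2\).

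For multiplicity, the first step is to show that \(v\mapsto c(v)\) is injective on \(V_1\). Each nontrivial path \(Q_i\) has two distinct ends. Each vertex is the end of exactly one path by Claim~\ref{clm:path-decomposition}. So distinct vertices of \(V_1\) correspond to distinct end edges, or possibly to the same edge counted twice. The latter would require a path of length one, which is excluded for the \(Q_i\). Since \(g\) is a bijection, distinct end edges carry distinct labels. Therefore the values \(c(v)\), \(v\in V_1\), are pairwise distinct.

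Now suppose \(\sigma_g(u)=\sigma_g(v)\) with \(u\ne v\) in \(V_1\). Then \(c(u)-c(v)=\rho(v)-\rho(u)\in\{-1,0,1\}\). It cannot be \(0\) by injectivity, so it is \(\pm1\), and exactly one of \(\rho(u),\rho(v)\) equals \(m+t\) while the other equals \(m+t+1\). Thus any given sum \(S\) can be attained by at most one vertex with \(\rho=m+t\), namely the one with \(c=S-m-t\). It can be attained by at most one vertex with \(\rho=m+t+1\), namely the one with \(c=S-m-t-1\). Both uniquenesses come from the injectivity of \(c\). Hence each sum occurs at most twice on \(V_1\).

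The only delicate point is the injectivity of \(c\): it relies on every vertex being an end of exactly one path and on the \(Q_i\) having length at least two, so that their two end edges are distinct. Once that is in place, the counting is immediate.
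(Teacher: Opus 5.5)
Your proposal is correct and follows essentially the same route as the paper. You write \(\sigma_g(v)=\rho(v)+c(v)\), get \(c(v)\ge t+2\) from Claim~\ref{clm:internal-pair-sums}, and bound the multiplicity by noting that injectivity of \(c\) leaves at most one vertex per value of \(\rho(v)\in\{m+t,m+t+1\}\). Your explicit check of that injectivity (edge-disjointness, and each \(Q_i\) having length at least two so its end edges differ) spells out what the paper states in one line.
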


\begin{proof}
By Claim~\ref{clm:internal-pair-sums}, the label \(t+1\) is not assigned
to an end edge of a nontrivial path, so \(c(v)\ge t+2\). Moreover, the
values \(c(v)\), \(v\in V_1\), are pairwise distinct, because distinct
endpoint occurrences correspond to distinct end edges.

Write
\[
\rho(v)=m+t+\varepsilon(v),
\qquad
\varepsilon(v)\in\{0,1\}.
\]
Then
\[
\sigma_g(v)=m+t+\varepsilon(v)+c(v).
\]
For a fixed vertex sum, there is at most one possible vertex for each of
the two values of \(\varepsilon(v)\), since the values \(c(v)\) are
pairwise distinct. Hence every sum on \(V_1\) occurs at most twice.
Finally,
\[
\sigma_g(v)\ge(m+t)+(t+2)=m+2t+2.
\]
\end{proof}

It remains to assign the reserved labels in \([t]\) to the paths of
length one.

\begin{claim}
\label{clm:V0-sums}
The labels in \([t]\) can be assigned to the paths of length one so that
every vertex sum induced on \(V_0\) occurs at most twice. Moreover,
\[
\sigma_g(v)\le m+2t+1
\qquad\text{for every }v\in V_0.
\]
\end{claim}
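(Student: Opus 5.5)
The plan is to exploit the fact that every path of length one pairs up two vertices of \(V_0\), and then to order the reserved labels according to the parities \(\varepsilon\) at the two ends.

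\emph{Structure of \(V_0\).} By Claim~\ref{clm:path-decomposition}, each vertex is an end vertex of exactly one path of \(\mathcal P\). A path of length one is a single edge \(uv\), and it is the unique path ending at \(u\) and the unique path ending at \(v\). Hence both \(u\) and \(v\) lie in \(V_0\). Conversely, every vertex of \(V_0\) arises in this way from exactly one such edge. So the \(t\) paths of length one form a perfect matching of \(V_0\), and \(|V_0|=2t\).

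\emph{Form of the sums.} For \(v\in V_0\) whose length-one path receives the label \(\ell\in[t]\), Claim~\ref{clm:internal-pair-sums} gives
\[
\sigma_g(v)=\rho(v)+\ell=m+t+\varepsilon(v)+\ell,
\]
with \(\varepsilon(v)\in\{0,1\}\) defined as in the proof of Claim~\ref{clm:V1-sums}. The upper bound is then automatic for any assignment:
\[
\sigma_g(v)\le m+t+1+t=m+2t+1.
\]

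\emph{Choosing the assignment.} The real content is the multiplicity condition. Fix a value \(m+t+x\). A vertex \(v\in V_0\) attains this value only in two ways: either \(\varepsilon(v)=0\) and its edge has label \(x\), or \(\varepsilon(v)=1\) and its edge has label \(x-1\). Since the labels in \([t]\) are used bijectively, at most two edges are involved. Thus the multiplicity of this value equals
\[
\#\{\text{ends of the edge labelled }x\text{ with }\varepsilon=0\}
+\#\{\text{ends of the edge labelled }x-1\text{ with }\varepsilon=1\}.
\]
An arbitrary assignment could therefore produce multiplicity up to four. To prevent this, define the type of a length-one edge \(uv\) as
\[
k(uv)=\varepsilon(u)+\varepsilon(v)\in\{0,1,2\}.
\]
Assign the labels \(1,2,\ldots,t\) in increasing order: first to all type-\(0\) edges, then to all type-\(1\) edges, then to all type-\(2\) edges. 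Labels are thus nondecreasing in type.

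\emph{Verification.} Suppose the edge labelled \(x\) has type \(k\). It contributes \(2-k\) vertices with \(\varepsilon=0\). The edge labelled \(x-1\), if it exists, has type \(k'\le k\) and contributes \(k'\) vertices with \(\varepsilon=1\). The multiplicity is therefore at most
\[
(2-k)+k'\le 2.
\]
There are two boundary cases. If \(x=t+1\), only the edge labelled \(t\) contributes, giving at most \(2\). If \(x=1\), only the edge labelled \(1\) contributes, again giving at most \(2\). Hence every sum on \(V_0\) occurs at most twice.

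The only genuine obstacle is seeing that the naive bound is four and that sorting the edges by type kills the bad overlaps. Once that is observed, the remaining checks are routine.
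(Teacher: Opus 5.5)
Your proof is correct and is essentially the paper's argument. Your type \(k(uv)=\varepsilon(u)+\varepsilon(v)\) equals \(2-a(e)\) in the paper's notation, so labelling in nondecreasing type is exactly the paper's ordering by nonincreasing \(a(e)\), and your count \((2-k)+k'\le 2\), together with the two boundary cases, is the paper's bound \(a_k+(2-a_{k-1})\le 2\).
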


\begin{proof}
The assertion is vacuous when \(t=0\), so assume \(t\ge1\). By
Claim~\ref{clm:path-decomposition}, the \(t\) paths of length one are
pairwise vertex-disjoint, and their \(2t\) end vertices are precisely the
vertices in \(V_0\).

Put \(A=m+t\). For a single-edge path \(e=uv\), define
\[
a(e)
=
\left|
\{x\in\{u,v\}:\rho(x)=A\}
\right|.
\]
Thus \(a(e)\in\{0,1,2\}\). Order the single-edge paths as
\[
e^{(1)},e^{(2)},\ldots,e^{(t)}
\]
so that
\[
a\bigl(e^{(1)}\bigr)
\ge
a\bigl(e^{(2)}\bigr)
\ge\cdots\ge
a\bigl(e^{(t)}\bigr),
\]
and assign
\[
g\bigl(e^{(j)}\bigr)=j.
\]
Write
\[
a_j=a\bigl(e^{(j)}\bigr).
\]

If \(x\) is an end vertex of \(e^{(j)}\), then
\[
\sigma_g(x)\in\{A+j,A+j+1\}.
\]
The multiplicity of \(A+1\) is \(a_1\le2\). For \(2\le k\le t\), the sum
\(A+k\) can arise only from an end of \(e^{(k)}\) with partial sum \(A\),
or from an end of \(e^{(k-1)}\) with partial sum \(A+1\). Its
multiplicity is therefore
\[
a_k+(2-a_{k-1})\le2,
\]
where the inequality follows from \(a_k\le a_{k-1}\). Finally, the
multiplicity of \(A+t+1\) is \(2-a_t\le2\). Thus every sum on \(V_0\)
occurs at most twice, and
\[
\sigma_g(x)\le A+t+1=m+2t+1
\]
for every \(x\in V_0\).
\end{proof}

The alternating assignment uses every label in
\(\{t+1,\ldots,m\}\) exactly once. Indeed, if \(m-t=2q\), the low and
high parts are
\[
\{t+1,\ldots,t+q\}
\qquad\text{and}\qquad
\{m-q+1,\ldots,m\},
\]
and
\[
m-q+1=t+q+1.
\]
If \(m-t=2q+1\), they are
\[
\{t+1,\ldots,t+q\}
\qquad\text{and}\qquad
\{m-q,\ldots,m\},
\]
and
\[
m-q=t+q+1.
\]
Together with the labels in \([t]\), this proves that
\(g:E(G)\to[m]\) is a bijection.

By definition,
\[
V(G)=V_0\mathbin{\dot\cup}V_1.
\]
Claims~\ref{clm:V1-sums} and~\ref{clm:V0-sums} show that every induced
sum occurs at most twice, and their respective ranges are disjoint.
Finally,
\[
\sigma_g(v)\ge\rho(v)+1\ge m+t+1\ge m+1
\]
for every \(v\in V(G)\). This completes the proof.
\end{proof}

\subsection{Separating the sums at the original vertices}
\label{subsec:separating-original-sums}

\begin{lemma}
\label{lem:orientation-transfer}
Let \(g:E(G)\to[m]\) be a bijection such that every vertex sum induced by
\(g\) occurs at most twice. Then there exist an orientation \(D\) of \(G\)
and a bijection
\[
f:E(S_2(G))\longrightarrow[3m]
\]
such that the sums induced by \(f\) at the original vertices of \(G\) are
pairwise distinct.
\end{lemma}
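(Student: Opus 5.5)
We transfer \(g\) to \(S_2(G)\) blockwise and correct the equal sums with an orientation in which every original vertex has indegree \(1\) or \(2\).

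\emph{Labeling.} Fix any orientation \(D\) of \(G\). In \(S_2(G)\), an arc \(u\to v\) of \(D\) becomes a path \(u\,x\,y\,v\) with three edges. If \(g(uv)=k\), set
\[
f(ux)=3k,\qquad f(xy)=3k-2,\qquad f(yv)=3k-1 .
\]
The blocks \(\{3k-2,3k-1,3k\}\), \(k\in[m]\), partition \([3m]\), so \(f\) is a bijection onto \([3m]\). At an original vertex \(v\), each incident edge contributes \(3g(e)\) if \(v\) is its tail and \(3g(e)-1\) if \(v\) is its head. Hence
\[
\sigma_f(v)=3\sigma_g(v)-d^-_D(v).
\]

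\emph{When sums coincide.} Suppose \(\sigma_f(u)=\sigma_f(v)\). Then \(3(\sigma_g(u)-\sigma_g(v))=d^-_D(u)-d^-_D(v)\).
If every indegree lies in \(\{1,2\}\), the right-hand side has absolute value at most \(1\). It is therefore \(0\), which forces \(\sigma_g(u)=\sigma_g(v)\) and \(d^-_D(u)=d^-_D(v)\).
So it suffices to find an orientation \(D\) with two properties:
\begin{itemize}
\item \(d^-_D(v)\in\{1,2\}\) for every vertex \(v\);
\item any two distinct vertices with equal \(g\)-sums have different indegrees.
\end{itemize}

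\emph{Constructing the orientation.} By hypothesis, each \(g\)-sum occurs at most twice, so the vertices sharing a sum form disjoint pairs. Since \(n\) is even and these pairs cover an even number of vertices, the remaining vertices are even in number. Pair them arbitrarily. This gives a perfect matching \(M\) on \(V(G)\), whose edges need not be edges of \(G\).

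Let \(H=G+M\). This is a \(4\)-regular multigraph, possibly with parallel edges where an edge of \(M\) is already an edge of \(G\). Every component of \(H\) is connected with all degrees even, so it has a closed Euler tour. Orient every edge of \(H\) along its tour; then each vertex has indegree \(2\) and outdegree \(2\) in \(H\). Let \(D\) be the restriction of this orientation to \(E(G)\).

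Take a pair \(\{u,v\}\in M\), and say its matching edge is oriented \(u\to v\). In \(D\), we get \(d^-_D(u)=2\) and \(d^-_D(v)=1\). Thus every vertex has indegree in \(\{1,2\}\), and the two vertices of each equal-sum pair receive different indegrees, which is what we needed.

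The key step is choosing the transfer so that \(\sigma_f=3\sigma_g-d^-\). This formula rules out collisions between different \(g\)-sums, because the indegree correction ranges over only two consecutive values. The Euler-tour balancing of the auxiliary \(4\)-regular multigraph \(H\) is exactly what keeps the indegrees in that two-value window while also separating each equal-sum pair.
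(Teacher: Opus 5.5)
Your proof is correct. The orientation step is exactly the paper's, with indegrees in place of outdegrees: complete the equal-sum pairs to a perfect matching $M$, take Euler circuits of the $4$-regular multigraph $G+M$, and delete $M$.

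The only real difference is the transfer labeling. You use consecutive blocks $\{3k-2,3k-1,3k\}$, which gives $\sigma_f(v)=3\sigma_g(v)-d^-_D(v)$, and you conclude by divisibility by $3$. The paper puts the label $i$ on the middle edge and $m+2i-1$, $m+2i$ on the outer edges. That gives $\sigma_f(v)=3m+2\sigma_g(v)-d^+_D(v)$, and the paper concludes by parity.

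For the lemma as stated, both choices work equally well. The paper's choice, however, is made with Theorem~\ref{thm:main} in mind. It keeps every subdivision-vertex sum ($m+3i-1$ or $m+3i$) at most $4m$. Using $\sigma_g\ge m+1$ from Lemma~\ref{lem:auxiliary-labeling}, every original-vertex sum is at least $5m$.

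With your blocks, the subdivision sums are $6k-3$ and $6k-2$, ranging up to $6m-2$. An original vertex with $d^-_D(v)=2$ and $\sigma_g(v)=2k\le 2m$ receives sum $6k-2$. This coincides with the subdivision vertex next to the tail of the edge labeled $k$. So your $f$ proves the lemma, but it could not be reused in the proof of the main theorem without switching to the paper's middle-edge labeling.
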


\begin{proof}
For every vertex sum that occurs twice, pair the two vertices receiving
that sum. Pair the remaining vertices arbitrarily; this is possible
because a cubic graph has even order. Let \(M\) be the resulting
auxiliary perfect matching on \(V(G)\).

\begin{claim}
\label{clm:orientation}
The graph \(G\) has an orientation \(D\) such that
\[
d_D^+(v)\in\{1,2\}
\qquad\text{for every }v\in V(G),
\]
and any two distinct vertices with the same \(g\)-sum have different
outdegrees in \(D\).
\end{claim}

\begin{proof}
The multigraph \(G+M\) is \(4\)-regular. Orient an Euler circuit in each
component and then delete the edges of \(M\). Every vertex has outdegree
\(1\) or \(2\). Moreover, if an edge \(uv\in M\) is oriented from \(u\)
to \(v\), then deleting it leaves
\[
d_D^+(u)=1
\qquad\text{and}\qquad
d_D^+(v)=2.
\]
Hence the two vertices in every prescribed equal-sum pair have different
outdegrees.
\end{proof}

Write
\[
E(G)=\{e_1,e_2,\ldots,e_m\}
\]
so that \(g(e_i)=i\), and orient \(e_i\) from \(u_i\) to \(v_i\)
according to \(D\). In \(S_2(G)\), replace \(e_i\) by the path
\[
u_i x_i y_i v_i
\]
and define
\[
f(x_i y_i)=i,\qquad
f(u_i x_i)=m+2i-1,\qquad
f(y_i v_i)=m+2i.
\]
As \(i\) ranges over \([m]\), the labels \(i\), \(m+2i-1\), and \(m+2i\)
form exactly the set \([3m]\), so \(f\) is a bijection.

For an original vertex \(v\), an edge \(e_i\) directed out of \(v\)
contributes \(m+2i-1\), whereas an edge directed into \(v\) contributes
\(m+2i\). Since \(G\) is cubic,
\[
\sigma_f(v)=3m+2\sigma_g(v)-d_D^+(v).
\]
If two original vertices \(u\) and \(v\) satisfy
\(\sigma_f(u)=\sigma_f(v)\), then
\[
2\bigl(\sigma_g(u)-\sigma_g(v)\bigr)
=
d_D^+(u)-d_D^+(v).
\]
The left-hand side is even, while the right-hand side belongs to
\(\{-1,0,1\}\). Thus both sides are zero. If \(u\ne v\), then \(u\) and
\(v\) are the prescribed pair corresponding to their common \(g\)-sum,
contradicting Claim~\ref{clm:orientation}. Hence \(u=v\), and the
original-vertex sums are pairwise distinct.
\end{proof}

\subsection{Proof of the main theorem}
\label{subsec:proof-main-theorem}

\begin{theorem}
\label{thm:main}
The \(2\)-subdivision of every finite simple cubic graph is strongly
antimagic, regardless of whether the graph is connected.
\end{theorem}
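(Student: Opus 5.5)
The plan is to combine Lemma~\ref{lem:auxiliary-labeling} with Lemma~\ref{lem:orientation-transfer}, keeping track of the explicit labeling \(f\) built in the latter. First, Lemma~\ref{lem:auxiliary-labeling} gives a bijection \(g:E(G)\to[m]\) with every vertex sum at least \(m+1\) and every sum occurring at most twice. Next, Lemma~\ref{lem:orientation-transfer}, applied to this \(g\), gives an orientation \(D\) and the bijection \(f:E(S_2(G))\to[3m]\) defined by \(f(x_iy_i)=i\), \(f(u_ix_i)=m+2i-1\), \(f(y_iv_i)=m+2i\). That lemma already shows that the sums at the original (degree-three) vertices are pairwise distinct. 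What remains is to handle the subdivision vertices and to check the strong condition.

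For the subdivision vertices on the path replacing \(e_i\), the sums are
\[
\sigma_f(x_i)=(m+2i-1)+i=m+3i-1,\qquad \sigma_f(y_i)=i+(m+2i)=m+3i.
\]
As \(i\) ranges over \([m]\), these values are \(m+2,m+3,\dots,4m\), one value for each of the \(2m\) vertices, with \(x_i\) taking \(m+3i-1\) and \(y_i\) taking \(m+3i\). Since \(3i-1\) and \(3i\) have distinct residues mod \(3\) and are each injective in \(i\), all \(2m\) values are pairwise distinct. Every degree-two sum is at most \(4m\).

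For an original vertex \(v\), the formula from Lemma~\ref{lem:orientation-transfer} gives
\[
\sigma_f(v)=3m+2\sigma_g(v)-d_D^+(v)\ge 3m+2(m+1)-2=5m>4m,
\]
using \(\sigma_g(v)\ge m+1\) and \(d_D^+(v)\le2\). So every degree-three sum exceeds every degree-two sum. The two families are therefore disjoint, so \(f\) is antimagic. Since degrees in \(S_2(G)\) are only \(2\) and \(3\), it is also strongly antimagic.

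No step is a real obstacle, since the work lies in the two lemmas. The only points that need care are the separation inequality \(5m>4m\), which holds for \(m\ge1\), and the fact that disconnectedness causes no problem: the lemmas were proved without connectivity, and the Euler-circuit argument is applied componentwise to \(G+M\).
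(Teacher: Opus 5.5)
Your proof is correct and is essentially the paper's first proof of Theorem~\ref{thm:main}. You combine Lemma~\ref{lem:auxiliary-labeling} with Lemma~\ref{lem:orientation-transfer}, check that the subdivision sums \(m+3i-1\) and \(m+3i\) are pairwise distinct and at most \(4m\), and use \(\sigma_f(v)\ge 5m>4m\) at the original vertices. One small wording fix: the \(2m\) subdivision sums lie in the range \(m+2,\dots,4m\) but do not fill it, since values of the form \(m+3i+1\) are skipped; this does not affect your distinctness argument.
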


\begin{proof}
Let \(g:E(G)\to[m]\) be the labeling supplied by
Lemma~\ref{lem:auxiliary-labeling}. Apply
Lemma~\ref{lem:orientation-transfer} to obtain an orientation \(D\) and
the corresponding bijection
\[
f:E(S_2(G))\longrightarrow[3m].
\]
The sums induced by \(f\) at the original vertices are pairwise distinct.

\begin{claim}
\label{clm:subdivision-vertex-sums}
All sums induced by \(f\) at the subdivision vertices are pairwise
distinct, and each of them is at most \(4m\).
\end{claim}

\begin{proof}
For every \(i\in[m]\),
\[
\sigma_f(x_i)=m+3i-1
\qquad\text{and}\qquad
\sigma_f(y_i)=m+3i.
\]
Each sequence is strictly increasing, and an equality between the two
sequences would imply
\[
3(i-j)=1,
\]
which is impossible. Thus all subdivision-vertex sums are pairwise
distinct, and their maximum is
\[
\sigma_f(y_m)=4m.
\]
\end{proof}

For every original vertex \(v\), Lemma~\ref{lem:auxiliary-labeling} and
\(d_D^+(v)\le2\) give
\[
\begin{aligned}
\sigma_f(v)
&=3m+2\sigma_g(v)-d_D^+(v)\\
&\ge3m+2(m+1)-2\\
&=5m\\
&>4m.
\end{aligned}
\]
Therefore the original-vertex sums are pairwise distinct, the
subdivision-vertex sums are pairwise distinct, and every degree-three
vertex has larger sum than every degree-two vertex. Hence \(f\) is a
strongly antimagic labeling of \(S_2(G)\).
\end{proof}

\section{A Direct Antimagic Labeling}
\label{sec:direct-labeling}

The proof in the preceding section proceeds in two stages. It first
constructs a labeling of the original cubic graph in which every vertex
sum is sufficiently large and occurs at most twice, and then uses an
orientation to separate the remaining equal sums after subdivision. The
structure of that argument also suggests a more direct construction on
\(S_2(G)\).

Using the same path decomposition, we orient each path and assign
alternating large and small indices to its edges. We then order the two
outer labels on each subdivided edge according to the type of its index.
This arrangement makes the contribution from the two edges belonging to
the path on which an original vertex is internal the same for every
original vertex. The remaining contribution, arising from the unique path
ending at that vertex, then distinguishes its total sum from all others.

We now give this construction as an alternative proof of
Theorem~\ref{thm:main}.

\begin{proof}[Alternative proof of Theorem~\ref{thm:main}]
Let \(G\) be a finite simple cubic graph of order \(n\) and size
\(m=3n/2\). By Claim~\ref{clm:path-decomposition}, the edge set of \(G\)
admits a decomposition \(\mathcal P\) into \(n/2\) paths such that every
vertex is an end vertex of exactly one path. Since \(G\) is cubic, every
vertex is also an internal vertex of exactly one path.

Let \(t\) be the number of paths of length one. Denote the nontrivial
paths by
\[
Q_1,Q_2,\ldots,Q_s,
\]
where \(Q_1\) has length at least three. Give each \(Q_i\) an arbitrary
direction, list its edges in that direction, and concatenate the lists to
obtain
\[
e_1,e_2,\ldots,e_{m-t}.
\]

As in the proof of Lemma~\ref{lem:auxiliary-labeling}, assign an index
\(\iota(e)\in[m]\) to every edge of \(G\) as follows. Assign the indices
\(1,2,\ldots,t\) bijectively to the paths of length one, and set
\[
\iota(e_{2j-1})=m-j+1
\qquad\text{and}\qquad
\iota(e_{2j})=t+j
\]
whenever the indicated edge exists. Thus, along the directed nontrivial
paths, the indices occur in the global order
\[
m,\ t+1,\ m-1,\ t+2,\ m-2,\ t+3,\ldots.
\]
The verification in the proof of Lemma~\ref{lem:auxiliary-labeling}
shows that
\[
\iota:E(G)\longrightarrow[m]
\]
is a bijection.

Call an edge \(e_{2j-1}\) a \emph{large-type edge} and an edge \(e_{2j}\)
a \emph{small-type edge}. Orient each path of length one arbitrarily. For
an oriented edge \(e=uv\) with \(\iota(e)=i\), write the corresponding
path in \(S_2(G)\) as
\[
u x_e y_e v.
\]
Define
\[
f:E(S_2(G))\longrightarrow[3m]
\]
as follows. If \(e\) is a large-type edge, or if \(e\) is a path of
length one, assign
\[
f(ux_e)=m+2i,\qquad
f(x_e y_e)=i,\qquad
f(y_e v)=m+2i-1.
\]
If \(e\) is a small-type edge, assign
\[
f(ux_e)=m+2i-1,\qquad
f(x_e y_e)=i,\qquad
f(y_e v)=m+2i.
\]
For each \(i\in[m]\), the three labels associated with the edge of index
\(i\) are precisely
\[
i,\qquad m+2i-1,\qquad m+2i.
\]
Hence \(f\) is a bijection from \(E(S_2(G))\) to \([3m]\).

The order of the two outer labels does not affect the sums at the
subdivision vertices. As in Claim~\ref{clm:subdivision-vertex-sums}, the
two subdivision vertices on the path corresponding to index \(i\) receive
the sums
\[
m+3i-1
\qquad\text{and}\qquad
m+3i.
\]
Consequently, all subdivision-vertex sums are pairwise distinct and are
at most \(4m\).

We now consider an original vertex \(v\). By
Claim~\ref{clm:path-decomposition}, \(v\) is an internal vertex of
exactly one directed path in \(\mathcal P\). The two edges of that path
incident with \(v\) are consecutive in the global list.

Suppose first that a large-type edge of index \(m-j+1\) enters \(v\) and
a small-type edge of index \(t+j\) leaves \(v\). Their contributions at
\(v\) are
\[
m+2(m-j+1)-1
\qquad\text{and}\qquad
m+2(t+j)-1,
\]
respectively, and their sum is
\[
4m+2t.
\]

Suppose instead that a small-type edge of index \(t+j\) enters \(v\) and
a large-type edge of index \(m-j\) leaves \(v\). Their contributions at
\(v\) are
\[
m+2(t+j)
\qquad\text{and}\qquad
m+2(m-j),
\]
respectively, and their sum is again
\[
4m+2t.
\]
Thus, at every original vertex, the two edges belonging to the path on
which the vertex is internal contribute the same constant \(4m+2t\).

Let \(\lambda(v)\) be the label on the remaining edge incident with
\(v\), namely, the outer edge arising from the unique path of
\(\mathcal P\) that ends at \(v\). Then
\[
\sigma_f(v)=4m+2t+\lambda(v).
\]
The values
\[
\lambda(v),\qquad v\in V(G),
\]
are pairwise distinct. Indeed, different endpoint occurrences on
nontrivial paths correspond to different end edges, while the two ends of
a path of length one receive the two distinct outer labels \(m+2i-1\)
and \(m+2i\). Since different original edges use disjoint pairs of outer
labels, no two original vertices receive the same value of \(\lambda\).
It follows that the sums induced by \(f\) at the original vertices are
pairwise distinct.

Moreover, every outer label is at least \(m+1\), and hence
\[
\sigma_f(v)
=
4m+2t+\lambda(v)
\ge
5m+2t+1
>
4m
\]
for every original vertex \(v\). Therefore no original-vertex sum equals
a subdivision-vertex sum. Since the subdivision-vertex sums are pairwise
distinct and the original-vertex sums are pairwise distinct, \(f\) is a
strongly antimagic labeling of \(S_2(G)\).
\end{proof}

\section{An Extension to Graphs with Odd Degrees}
\label{sec:odd-degree-extension}

The full strength of Claim~\ref{clm:path-decomposition}, together with
the direct labeling developed in Section~\ref{sec:direct-labeling},
allows the cubic result to be extended to graphs in which every vertex
has odd degree at least three. The restriction on degree-one vertices is
essential for the present construction, because the separation between
the original vertices and the subdivision vertices relies on every
original vertex being internal to at least one path of the decomposition.

\begin{theorem}
\label{thm:odd-degree-extension}
Let \(G\) be a finite simple graph, not necessarily connected, such that
\(d_G(v)\) is odd and \(d_G(v)\ge3\) for every \(v\in V(G)\). Then
\(S_2(G)\) is strongly antimagic.
\end{theorem}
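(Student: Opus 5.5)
We follow the direct construction of Section~\ref{sec:direct-labeling} verbatim, now using the full strength of Claim~\ref{clm:path-decomposition}. Let \(n=|V(G)|\) and \(m=|E(G)|\). Fix a decomposition \(\mathcal P\) into \(n/2\) paths, so every vertex \(v\) is an end of exactly one path and internal to exactly \(q(v)=(d_G(v)-1)/2\ge1\) paths.

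\textbf{Setting up the labeling.} Let \(t\) be the number of paths of length one. Some path has length at least three when \(m>n\); since every degree is at least three, \(m\ge 3n/2>n\). We may therefore list a path of length at least three first. We then concatenate the directed nontrivial paths into \(e_1,\dots,e_{m-t}\) and use the same alternating indices \(m,t+1,m-1,t+2,\dots\). The indices \(1,\dots,t\) go to the single-edge paths. The bijectivity check from Lemma~\ref{lem:auxiliary-labeling} applies unchanged. We define \(f\) on each subdivided edge exactly as before: large-type edges and single-edge paths get \(m+2i,\,i,\,m+2i-1\) along the orientation, and small-type edges get \(m+2i-1,\,i,\,m+2i\). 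Subdivision-vertex sums are then \(m+3i-1\) and \(m+3i\), pairwise distinct and at most \(4m\).

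\textbf{Sums at original vertices.} For an original vertex \(v\), each internal occurrence of \(v\) on a directed path uses two consecutive edges of the global list. The computation in Section~\ref{sec:direct-labeling} is purely local: whether a large edge enters and a small edge leaves, or vice versa, the pair contributes exactly \(4m+2t\). This holds for each of the \(q(v)\) internal occurrences separately, since distinct occurrences use disjoint edge pairs. Hence
\[
\sigma_f(v)=(4m+2t)\,q(v)+\lambda(v),
\]
where \(\lambda(v)\in[m+1,3m]\) is the outer label from the unique path ending at \(v\). As before, the values \(\lambda(v)\) are pairwise distinct.

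\textbf{Distinctness and strong ordering.} Since \(m+1\le\lambda(v)\le 3m<4m+2t\), the integer intervals \([(4m+2t)q+m+1,\,(4m+2t)q+3m]\) for different \(q\) are disjoint and increase with \(q\). Vertices with different \(q\) therefore have different sums, with the larger \(q\) (equivalently the larger degree) giving the larger sum. Vertices with the same \(q\) are separated by distinct \(\lambda\). Every original sum is at least \(5m+2t+1>4m\), so it exceeds every subdivision-vertex sum, and degree-two vertices come first.

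The step needing the most care is confirming that the constant-pair computation is genuinely local, so that it still holds when a vertex appears several times inside paths, including the same path revisiting it. Paths are simple, so each vertex appears at most once per path; the relevant case is \(v\) internal to several different paths. This works because each consecutive pair in the global list has the alternating large/small pattern with index sum \(m+t+1\) or \(m+t\), independent of which path it lies on. The existence of a long path for \(e_2\) not being an end edge is not needed here, as \(\lambda\)'s distinctness suffices.
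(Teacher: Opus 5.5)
Your proposal is correct and follows the paper's proof essentially step for step. You use the same decomposition and the same direct labeling, obtain the same per-occurrence constant $4m+2t$ giving $\sigma_f(v)=q(v)(4m+2t)+\lambda(v)$, and separate the sums by degree and from the subdivision-vertex sums exactly as the paper does; your disjoint-interval phrasing simply restates the paper's bound $\sigma_f(v)-\sigma_f(u)\ge 2m+2t+1>0$. Your side remark is also right: the path of length at least three is not actually needed in the direct construction, since injectivity of $\lambda$ alone suffices.
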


\begin{proof}
Let \(n=|V(G)|\) and \(m=|E(G)|\). By
Claim~\ref{clm:path-decomposition}, let \(\mathcal P\) be a decomposition
of \(E(G)\) into \(n/2\) paths such that every vertex is an end vertex of
exactly one path. Since \(d_G(v)\ge3\) for every \(v\in V(G)\), we have
\(m\ge3n/2\). Hence at least one path in \(\mathcal P\) has length at
least three, and the construction of
Section~\ref{sec:direct-labeling} can be applied without change.

Let \(t\) be the number of paths of length one in \(\mathcal P\), and let
\(q(v)\) denote the number of paths in which \(v\) is an internal vertex.
By Claim~\ref{clm:path-decomposition},
\[
q(v)=\frac{d_G(v)-1}{2}.
\]
As shown in Section~\ref{sec:direct-labeling}, each path on which \(v\)
is internal contributes \(4m+2t\) to \(\sigma_f(v)\), while the unique
path ending at \(v\) contributes an outer label \(\lambda(v)\). Hence
\[
\sigma_f(v)
=
q(v)(4m+2t)+\lambda(v)
=
\frac{d_G(v)-1}{2}(4m+2t)+\lambda(v),
\]
where the values \(\lambda(v)\), \(v\in V(G)\), are pairwise distinct
and satisfy
\[
m+1\le\lambda(v)\le3m.
\]

Therefore vertices of the same degree receive distinct sums. If
\(d_G(u)<d_G(v)\), then both degrees are odd, so
\[
q(v)\ge q(u)+1.
\]
Consequently,
\[
\begin{aligned}
\sigma_f(v)-\sigma_f(u)
&\ge (4m+2t)+(m+1)-3m\\
&=2m+2t+1\\
&>0.
\end{aligned}
\]
Thus the sums at the original vertices are strictly ordered by degree.

Finally, the subdivision-vertex sums are pairwise distinct and at most
\(4m\), as proved in Section~\ref{sec:direct-labeling}. Since
\(d_G(v)\ge3\), we have \(q(v)\ge1\), and therefore
\[
\sigma_f(v)
\ge
(4m+2t)+(m+1)
=
5m+2t+1
>
4m
\]
for every original vertex \(v\). Hence every original vertex has larger
sum than every subdivision vertex, and \(f\) is a strongly antimagic
labeling of \(S_2(G)\).
\end{proof}

\section{Concluding Remarks}
\label{sec:conclusion}

We have proved that the \(2\)-subdivision of every finite simple cubic
graph is strongly antimagic, and have extended this result to finite
simple graphs in which every vertex has odd degree at least three. The
path decomposition in Claim~\ref{clm:path-decomposition} is the common
structural ingredient in these results. In the cubic case, it leads to two
different constructions, while in the odd-degree case it records
precisely how many decomposition paths pass through each vertex and
thereby makes the resulting vertex sums increase with degree.

Although the direct construction in
Section~\ref{sec:direct-labeling} is shorter, the method developed in
Section~\ref{sec:main-results} is retained for a different reason. It does
not require the original labeling to distinguish all vertex sums
immediately. Instead, it first constructs a labeling for which the sums
are sufficiently large and each repeated sum is carefully controlled, and
then uses an orientation after subdivision to remove the remaining
collisions. This separation of the argument into two stages may provide a
useful approach to other antimagic labeling problems in which a direct
construction is difficult to obtain.

The extension in Section~\ref{sec:odd-degree-extension} also indicates
the present limitation of the direct method. A vertex of odd degree \(d\)
is internal to \((d-1)/2\) paths of the decomposition, so the common
internal contribution separates vertices of different odd degrees. This
argument requires \(d\ge3\); a vertex of degree one has no such internal
contribution, and its sum need not be separated from the sums at the
subdivision vertices.

These observations suggest several natural questions. Is \(S_2(G)\)
antimagic for every connected finite simple graph \(G\) with at least one
edge? More strongly, for which graphs \(G\) is \(S_2(G)\) strongly
antimagic? In particular, can the construction be modified to accommodate
vertices of degree one, or more generally vertices of both odd and even
degrees? It would also be interesting to determine whether the ideas used
here can be adapted to uniform subdivisions \(S_\ell(G)\) for
\(\ell\ge3\).

\section*{Declaration of Generative AI and AI-Assisted Technologies in the Manuscript Preparation Process}

During the preparation of this work, the authors used ChatGPT, developed
by OpenAI, to assist with organizing the presentation, examining the
internal consistency of proposed constructions, exploring possible
exceptional cases, and improving the language and readability of the
manuscript. After using this tool, the authors reviewed and edited the
content as needed and take full responsibility for the content of the
published article.


\end{document}